\renewcommand{\footnoterule}{%
  \kern -3pt
  \hrule width \textwidth height 0.4pt
  \kern 2pt
}
\DeclareFontShape{OMX}{cmex}{m}{b}{<-> cmexb10}{}
\DeclareSymbolFont{boldlargesymbols}{OMX}{cmex}{m}{b}
\DeclareMathAccent{\bwidetilde}{\mathord}{boldlargesymbols}{"65}
\newtheoremstyle{break}
{14pt}{20pt}%
{}{}%
{\bfseries}{\vspace{0.5mm}}%
{\newline}{}%
\theoremstyle{break}
\theoremstyle{plain}
\newtheorem{theorem}{Theorem}[section]
\newtheorem{lemma}[theorem]{Lemma}
\theoremstyle{definition}
\newtheorem{definition}[theorem]{Definition}
\theoremstyle{remark}
\newtheorem{remark}[theorem]{Remark}
\definecolor{Cgrey}{rgb}{0.85,0.85,0.85}
\definecolor{Cblue}{rgb}{0.50,0.85,0.85}
\definecolor{Cred}{rgb}{1,0,0}
\definecolor{fancy}{rgb}{0.10,0.85,0.10}
\newcommand\Cbox[2]{%
    \newbox\contentbox%
    \newbox\bkgdbox%
    \setbox\contentbox\hbox to \hsize{%
        \vtop{
            \kern\columnsep
            \hbox to \hsize{%
                \kern\columnsep%
                \advance\hsize by -2\columnsep%
                \setlength{\textwidth}{\hsize}%
                \vbox{
                    \parskip=\baselineskip
                    \parindent=0bp
                    #2
                }%
                \kern\columnsep%
            }%
            \kern\columnsep%
        }%
    }%
    \setbox\bkgdbox\vbox{
        \color{#1}
        \hrule width  \wd\contentbox %
               height \ht\contentbox %
               depth  \dp\contentbox
        \color{black}
    }%
    \wd\bkgdbox=0bp%
    \vbox{\hbox to \hsize{\box\bkgdbox\box\contentbox}}%
    \vskip\baselineskip%
}
\DeclareMathAlphabet{\mathbbm}{U}{bbm}{m}{n}
\newcommand{\softd}{d\hspace{-0.2mm}'}
\renewcommand{\S}{\mathbb{S}}
\newcommand{\Id}{\mathbb{I}}
\newcommand{\Lp}[2]{L^{\raisebox{#2}{\scalebox{0.75}{$#1$}}}}
\newcommand{\reals}{\mathbb{R}}
\newcommand{\fatu}{\bm{u}}
\newcommand{\fatx}{\bm{x}}
\newcommand{\bvarrho}{\bwidetilde{\varrho}}
\newcommand{\bfatu}{\bwidetilde{\fatu}}
\newcommand{\btheta}{\bwidetilde{\theta}}
\newcommand{\bS}{\bwidetilde{S}}
\newcommand{\const}{\mathrm{const}}
\newcommand{\p}{p\hspace{0.2mm}}
\newcommand{\po}{\partial \hspace{0.1mm} \Omega}
\newcommand{\thetazero}{\theta_{\hspace{-0.3mm}0}}
\newcommand{\inttau}{\int_{0}^{\hspace{0.3mm}\tau}}
\newcommand{\into}{\int_{\hspace{0.2mm}\raisebox{-0.3mm}{\scalebox{0.75}{$\Omega$}}}}
\newcommand{\inttauinto}{\inttau\!\!\into}
\newcommand{\dx}{\mathrm{d}\bm{x}}
\newcommand{\dt}{\mathrm{d}t}
\newcommand{\dxdt}{\dx\hspace{0.3mm}\dt}
\newcommand{\dd}{\mathrm{d}}
\newcommand{\pt}{\partial_t}
\newcommand{\gradx}{\nabla_{\hspace{-0.7mm}\fatx}}
\newcommand{\divx}{\mathrm{div}_{\fatx}}
\newcommand{\omean}[1]{\langle #1 \rangle}
\newcommand{\Ck}[1]{C^{\hspace{0.3mm}#1}}
\newcommand{\Cone}{C^{\hspace{0.3mm}1}}
\renewenvironment{proof}[1][\proofname]{%
   \par\pushQED{\qed}\normalfont%
   \topsep6\p@\@plus6\p@\relax
   \trivlist\item[\hskip\labelsep\itshape\bfseries#1\@addpunct{.}]%
   \ignorespaces
}{%
   \popQED\endtrivlist\@endpefalse
}
\date{}
\begin{document}


\title{DMV-strong uniqueness principle for the compressible Navier-Stokes system with potential temperature transport\thanks{This work has been funded by the Deutsche Forschungsgemeinschaft (DFG, German Research Foundation) - Project number 233630050 - TRR 146 as well as by TRR 165 Waves to Weather. M.L. gratefully acknowledges support of the Gutenberg Research College of University Mainz. The authors wish to thank E.~Feireisl (Prague) and A.~Novotn\'y (Toulon) for fruitful discussions.}}

\author{M\' aria Luk\' a\v cov\' a\,-Medvi\softd ov\' a
\and Andreas Sch\"omer
}

\date{\today}

\maketitle

\bigskip

\centerline{Institute of Mathematics, Johannes Gutenberg-University Mainz}
\centerline{Staudingerweg 9, 55128 Mainz, Germany}
\centerline{lukacova@uni-mainz.de, anschoem@uni-mainz.de}

\begin{abstract}
We establish a DMV-strong uniqueness result for the compressible Navier-Stokes system with potential temperature transport.
The concept of generalized,  the so-called dissipative measure-valued (DMV), solutions was proposed in \citep{Lukacova_Schoemer_existence}, where their global-in-time existence was proved.
Here we show that strong solutions are stable in the class of DMV solutions. More precisely, a DMV solution coincides with a strong solution emanating from the same initial data as 
long as the strong solution exists.
\end{abstract}

{\bf Keywords:} compressible Navier-Stokes system $\bm{\cdot}$ measure-valued solution $\bm{\cdot}$  DMV-strong uniqueness principle 

\section{Introduction}\label{sec_intro}
In meteorological applications the following system of compressible Navier-Stokes equations governing 
the motion of
viscous Newtonian fluid is often used, see, e.g., \citep{Klein1, Klein, Lukacova_Wiebe_1, Lukacova_Wiebe_2},
\begin{align}
    \pt\varrho + \divx(\varrho\fatu) &= 0,\phantom{\divx(\S(\gradx\fatu))}  \label{cequation} \\[2mm]
    \pt(\varrho\fatu) + \divx(\varrho\fatu\otimes\fatu) + \gradx\hspace{0.2mm} \p(\varrho\theta) &= \divx(\S(\gradx\fatu)),\phantom{0}  \label{mequation} \\[2mm]
    \pt(\varrho\theta) + \divx(\varrho\theta\fatu) &= 0, \phantom{\divx(\S(\gradx\fatu))} \label{zequation}
\end{align}
where $\varrho\geq 0$, $\fatu$, and $\theta\geq 0$, denote the \textit{fluid density},  \textit{velocity}, and \textit{potential temperature}, respectively. The \textit{viscous stress tensor} $\S(\gradx\fatu)$ is determined by the stipulation
\begin{align}
    \S(\gradx\fatu) = \mu\!\left(\gradx\fatu + (\gradx\fatu)^T-\frac{2}{d}\,\divx(\fatu)\Id\right)+\lambda\,\divx(\fatu)\,\Id\,, \label{stress_tensor_0}
\end{align}
where the \textit{viscosity constants} $\mu$ and $\lambda$ satisfy
$    \mu > 0 $ and  $\lambda \geq -\frac{2}{d}\,\mu\, .$
The state equation for the \textit{pressure} $p$ reads
\begin{align}
    \p(\varrho\theta) = a(\varrho\theta)^\gamma\,, \qquad a = \const. >0\,, \label{isen_press}
\end{align}
where $\gamma>1$ is the so-called \textit{adiabatic index}.  System \eqref{cequation}--\eqref{zequation}  is solved on $(0,T) \times \Omega$, where $T>0$ is a given time and $\Omega\subset\reals^d$ a bounded domain, $d\in\{2,3\}.$ It is  accompanied with
 the initial data
\begin{align}
    \varrho(0,\cdot) = \varrho_0 \,, \qquad \theta(0,\cdot) = \thetazero \,, \qquad \fatu(0,\cdot) = \fatu_0\,, \label{initial}
\end{align}
and  no-slip boundary conditions
\begin{align}
    \fatu|_{[0,T]\times\po} = \mathbf{0}\,. \label{no_slip}
\end{align}

In the sequel, we shall call system (\ref{cequation})--(\ref{isen_press}) \textit{Navier-Stokes system with potential temperature transport}. For a brief overview of analytical results for this system we refer to  our recent paper \citep{Lukacova_Schoemer_existence}. It is to be pointed out that the existence of global-in-time weak solutions to (\ref{cequation})--(\ref{isen_press})
is available in three space dimensions only for 
$\gamma\geq 9/5$,  see Maltese et al.~\citep[Theorem 1 with $\mathcal{T}(s)=s^\gamma$]{Zatorska}. We note in passing that  a specific choice of the function $\mathcal{T}$ in \citep{Zatorska} yields $s=\theta$ and thus
the Navier-Stokes equations with potential temperature transport. More importantly, physically relevant values of the adiabatic index $\gamma$ lie in the interval $(1,5/3]$ for $d=3$. However, this is \emph{not} the case when the existence of global-in-time weak solutions is available. This drawback motivated our recent paper \citep{Lukacova_Schoemer_existence}, where we have identified a larger class of generalized solutions--\emph{dissipative measure-valued (DMV)
solutions} to the Navier-Stokes system with potential temperature transport. Analyzing the  convergence of a suitable numerical scheme, the mixed finite element--finite volume  method, we have proved global-in-time existence of DMV solutions
for all adiabatic indices $\gamma > 1$ for $d=2,3.$

The goal of the present paper is to show that the strong solutions to the Navier-Stokes system with potential temperature transport are stable in the class of DMV solutions. To this end we 
establish a \textit{DMV-strong uniqueness principle}. This result states that  the DMV and strong solutions emanating from the same initial data coincide. The key concept for the proof of this principle is the \textit{relative energy}: Once a suitable relative energy is identified and the corresponding relative energy inequality is derived, the proof of the DMV-strong uniqueness principle is essentially a consequence of Gronwall's lemma. This strategy for proving DMV/weak-strong uniqueness is not new; see, e.g., \citep{Feireisl_Wiedemann}, where DMV-strong uniqueness is proven for the Navier-Stokes system, and \citep[Chapter 6]{Feireisl_Lukacova_Book}, where DMV-strong uniqueness is proven for the barotropic Euler system, the complete Euler system, and the Navier-Stokes system. However, till now the weak-strong uniqueness principle was not available for the Navier-Stokes equations with potential temperature transport (\ref{cequation})--(\ref{isen_press}). The key difficulty lies in the pressure law that only depends on the total potential temperature $\varrho \theta$, without any independent control of the density $\varrho$. To cure this problem,  we will rewrite the pressure  as a function of the density and total physical entropy. This allows us to separate the effects of the density and potential temperature in the derivation of the relative energy and finally to show the DMV-strong uniqueness principle.

The paper is organized as follows: In Section \ref{sec_meas_sol}, we briefly repeat the relevant notation and our definition of DMV solutions to Navier-Stokes system with potential temperature transport proposed in \citep{Lukacova_Schoemer_existence}. Section \ref{sec_uniqueness} is devoted to the proof of the DMV-strong uniqueness principle.

\section{DMV solutions}\label{sec_meas_sol}
We start by introducing the \textit{pressure potential}
$P:[0,\infty)\to\reals$ as
\begin{align}
    P(z) = \frac{a}{\gamma-1}\,z^\gamma\,. \label{press_pot}
\end{align}
In what follows we write $\Omega_t = (0,t)\times\Omega$ whenever $t>0.$  
If $\mathcal{V}=\{\mathcal{V}_{(t,\fatx)}\}_{(t,\fatx)\,\in\,\Omega_T}$ is a space-time parametrized probability measure  acting on $\reals^{d+2}$, we write
\begin{align*}
    \omean{\mathcal{V}_{(t,\fatx)};g}\equiv\int_{\reals^{d+2}}g\;\dd\mathcal{V}_{(t,\fatx)}\equiv\int_{\reals^{d+2}}g(\bvarrho,\btheta,\bfatu)\;\dd\mathcal{V}_{(t,\fatx)}(\bvarrho,\btheta,\bfatu)
\end{align*}
whenever $g\in C(\reals^{d+2})$. In particular, we tend to write out the function $g$ in terms of the integration variables $(\bvarrho,\btheta,\bfatu)\in\reals\times\reals\times\reals^d\cong\reals^{d+2}$: if, for example, $g(\bvarrho,\btheta,\bfatu)=\bvarrho\,\bfatu$, then we also write
\begin{align*}
    \omean{\mathcal{V}_{(t,\fatx)};\bvarrho\,\bfatu} \qquad \text{instead of} \qquad \omean{\mathcal{V}_{(t,\fatx)};g}\,.
\end{align*}

We recall the definition of dissipative measure-valued solutions to the Navier-Stokes system with potential temperature transport (\ref{cequation})--(\ref{isen_press}) from \citep{Lukacova_Schoemer_existence}.

\begin{definition}[\textbf{DMV solutions}, {\citep[Definition 2.1]{Lukacova_Schoemer_existence}}]\label{def_meas_sol}
A parametrized probability measure $\mathcal{V}=\{\mathcal{V}_{(t,\fatx)}\}_{(t,\fatx)\,\in\,\Omega_T}$ that satisfies
\begin{gather*}
    \mathcal{V}\in\Lp{\infty}{0.2mm}_{\mathrm{weak}^\star}(\Omega_T;\mathcal{P}(\reals^{d+2}))\,\footnotemark, \qquad \reals^{d+2} = \big\{(\bvarrho,\btheta,\bfatu)\,\big|\,\bvarrho,\btheta\in\reals, \bfatu\in\reals^d\big\}\,,
\end{gather*}
and for which there exists a constant $c_\star>0$ such that
\begin{gather*}
    \mathcal{V}_{(t,\fatx)}\big(\{\bvarrho\geq 0\}\cap\{\btheta\geq c_\star\}\big) = 1 \quad \text{for a.a. $(t,\fatx)\in\Omega_T$,}
\end{gather*}
\footnotetext{$\mathcal{P}(\reals^{d+2})$ denotes the space of probability measures on $\reals^{d+2}$.}is called a \textit{dissipative measure-valued (DMV) solution} to the Navier-Stokes system with potential temperature transport (\ref{cequation})--(\ref{isen_press}) with initial and boundary conditions (\ref{initial}) and (\ref{no_slip}) if it satisfies:
\begin{itemize}
    \item{\textbf{energy inequality}
    \begin{align*}
    \fatu_{\mathcal{V}}\equiv\omean{\mathcal{V};\bfatu}\in\Lp{2}{0mm}(0,T;W^{1,2}_0(\Omega)^d)\,, \qquad \left\langle\mathcal{V};\frac{1}{2}\,\bvarrho\,|\bfatu|^2+P(\bvarrho\,\btheta)\right\rangle\in\Lp{1}{0mm}(\Omega_T)\,,
    \end{align*}
    and the integral inequality
    \begin{align}
        &\into\left\langle\mathcal{V}_{(\tau,\,\cdot\,)};\frac{1}{2}\,\bvarrho\,|\bfatu|^2+P(\bvarrho\,\btheta)\right\rangle\dx + \inttauinto \,\mathbb{S}(\gradx\fatu_{\mathcal{V}}):\gradx\fatu_{\mathcal{V}}\;\dxdt \notag \\[2mm]
        &\qquad \qquad \qquad \qquad + \int_{\,\overline{\Omega}}\dd\mathfrak{E}(\tau) + \int_{\,\overline{\Omega}_\tau}\!\dd\mathfrak{D} \leq \into\left[\,\frac{1}{2}\,\varrho_0|\fatu_0|^2+P(\varrho_0\thetazero)\right]\dx \label{energy_inequ}
    \end{align}
    holds for a.a. $\tau\in(0,T)$ with the \textit{energy concentration defect}
    \begin{align*}
    \mathfrak{E}\in\Lp{\infty}{0.2mm}_{\mathrm{weak}^\star}(0,T;\mathcal{M}^+(\overline{\Omega}))
    \end{align*}
    and the \textit{dissipation defect}
    \begin{align*}
    \mathfrak{D}\in\mathcal{M}^+(\,\overline{\Omega}_T)\,;
    \end{align*}}
    \item{\textbf{continuity equation}
    \begin{align*}
        \omean{\mathcal{V};\bvarrho\,}\in C_{\mathrm{weak}}([0,T];\Lp{\gamma}{0.5mm}(\Omega))\,, \quad \omean{\mathcal{V}_{(0,\fatx)};\bvarrho\,} = \varrho_0(\fatx) \;\; \text{for a.a. $\fatx\in\Omega$}
    \end{align*}
    and the integral identity
    \begin{align}
        \left[\,\into\,\omean{\mathcal{V}_{(t,\,\cdot\,)};\bvarrho\,}\,\varphi(t,\cdot)\;\dx\right]_{t\,=\,0}^{t\,=\,\tau} = \inttauinto\Big[\omean{\mathcal{V};\bvarrho\,}\,\pt\varphi + \omean{\mathcal{V};\bvarrho\,\bfatu}\cdot\gradx\varphi\Big]\;\dxdt
    \end{align}
    holds for all $\tau\in[0,T]$ and all $\varphi\in W^{1,\infty}(\Omega_T)$\footnotemark;\footnotetext{Here, the (Lipschitz) continuous representative of $\varphi\in W^{1,\infty}(\Omega_T)$ is meant.}}
    \item{\textbf{momentum equation}
    \begin{align*}
        \omean{\mathcal{V};\bvarrho\,\bfatu}\in C_{\mathrm{weak}}([0,T];\Lp{\frac{2\gamma}{\gamma+1}}{0.5mm}(\Omega)^d)\,, \quad \omean{\mathcal{V}_{(0,\fatx)};\bvarrho\,\bfatu} = \varrho_0(\fatx)\fatu_0(\fatx) \;\; \text{for a.a. $\fatx\in\Omega$}
    \end{align*}
    and the integral identity
    \begin{align}
    \left[\,\into\,\omean{\mathcal{V}_{(t,\,\cdot\,)};\bvarrho\,\bfatu}\cdot\bm{\varphi}(t,\cdot)\;\dx\right]_{t\,=\,0}^{t\,=\,\tau}
    &= \inttauinto\Big[\omean{\mathcal{V};\bvarrho\,\bfatu}\cdot\pt\bm{\varphi} + \omean{\mathcal{V};\bvarrho\,\bfatu\otimes\bfatu+\p(\bvarrho\,\btheta)\Id}:\gradx\bm{\varphi}\Big]\;\dxdt \notag \\[2mm]
    &\phantom{\,=\,}-\inttauinto \,\mathbb{S}(\gradx\fatu_{\mathcal{V}}):\gradx\bm{\varphi}\;\dxdt + \inttauinto\gradx\bm{\varphi}:\dd\bm{\mathfrak{R}}(t)\hspace{0.3mm}\dt
    \end{align}
    holds for all $\tau\in[0,T]$ and all $\bm{\varphi}\in\Cone(\,\overline{\Omega}_T)^d$ satisfying $\bm{\varphi}|_{[0,T]\times\po}=\bm{0}$, where the \textit{Reynolds concentration defect} fulfills
    \begin{gather*}
        \bm{\mathfrak{R}}\in\Lp{\infty}{0.2mm}_{\mathrm{weak}^\star}(0,T;\mathcal{M}(\overline{\Omega})^{d\times d}_{\mathrm{sym},+})\,\footnotemark \\[2mm]
        \text{and} \qquad \underline{d}\hspace{0.3mm}\mathfrak{E}\leq\mathrm{tr}(\bm{\mathfrak{R}})\leq\overline{d}\hspace{0.3mm}\mathfrak{E} \quad \text{for some constants $\overline{d}\geq\underline{d}> 0$;}
    \end{gather*}
    \footnotetext{$\mathcal{M}(\overline{\Omega})^{d\times d}_{\mathrm{sym},+}$ denotes the set of bounded Radon measures defined on $\overline{\Omega}$ and ranging in the set of symmetric positive semi-definite matrices, i.e., $\mathcal{M}(\overline{\Omega})^{d\times d}_{\mathrm{sym},+} = \left\{\mu\in\mathcal{M}(\overline{\Omega})^{d\times d}_{\mathrm{sym}}\,\left|\,\int_{\,\overline{\Omega}}\,\phi(\xi\otimes\xi):\dd\mu \geq 0 \;\text{for all} \; \xi\in\reals^d, \; \phi\in C(\overline{\Omega}), \; \phi\geq 0\right.\right\}$.}}
    \item{\textbf{potential temperature equation}
    \begin{align*}
        \omean{\mathcal{V};\bvarrho\,\btheta\,}\in C_{\mathrm{weak}}([0,T];\Lp{\gamma}{0.5mm}(\Omega))\,, \quad \omean{\mathcal{V}_{(0,\fatx)};\bvarrho\,\btheta\,} = \varrho_0(\fatx)\thetazero(\fatx) \;\; \text{for a.a. $\fatx\in\Omega$}
    \end{align*}
    and the integral identity
    \begin{align}
        \left[\,\into\,\omean{\mathcal{V}_{(t,\,\cdot\,)};\bvarrho\,\btheta\,}\,\varphi(t,\cdot)\;\dx\right]_{t\,=\,0}^{t\,=\,\tau} = \inttauinto\Big[\omean{\mathcal{V};\bvarrho\,\btheta\,}\,\pt\varphi + \omean{\mathcal{V};\bvarrho\,\btheta\,\bfatu}\cdot\gradx\varphi\Big]\;\dxdt
    \end{align}
    holds for all $\tau\in[0,T]$ and all $\varphi\in W^{1,\infty}(\Omega_T)$;}
    \item{\textbf{entropy inequality}
    \begin{align*}
        \omean{\mathcal{V}_{(0,\fatx)};\bvarrho\hspace{0.3mm}\ln(\btheta)} = \varrho_0(\fatx)\ln(\thetazero(\fatx)) \;\;\text{for a.a. $\fatx\in\Omega$}
    \end{align*}
    and for any $\psi\in W^{1,\infty}(\Omega_T)$, $\psi\geq 0$, the integral inequality
    \begin{align}
        \left[\,\into\omean{\mathcal{V}_{(t,\,\cdot\,)};\bvarrho\hspace{0.3mm}\ln(\btheta)}\,\psi(t,\cdot)\;\dx\right]^{t\,=\,\tau}_{t\,=\,0} \geq \inttauinto \big[\omean{\mathcal{V};\bvarrho\hspace{0.3mm}\ln(\btheta)}\,\pt\psi + \omean{\mathcal{V};\bvarrho\hspace{0.3mm}\ln(\btheta)\bfatu}\cdot\gradx\psi\big]\,\dxdt \label{entropy_inequ}
    \end{align}
    is satisfied for a.a. $\tau\in(0,T)$;}
    \item{\textbf{Poincaré's inequality} \\
    there exists a constant $C_P>0$ such that
    \begin{align}
    \inttauinto \,\omean{\mathcal{V};|\bfatu-\bm{U}|^2}\;\dxdt \leq C_P\!\left(\,\inttauinto|\gradx(\fatu_{\mathcal{V}}-\bm{U})|^2\;\dxdt+ \inttau\!\!\int_{\,\overline{\Omega}}\,\dd\mathfrak{E}(t)\hspace{0.3mm}\dt + \int_{\,\overline{\Omega}_\tau}\!\dd\mathfrak{D}\right) \label{poincare_inequ}
    \end{align}
    for a.a. $\tau\in(0,T)$ and all $\bm{U}\in\Lp{2}{0mm}(0,T;W^{1,2}_0(\Omega)^d)$.}
\end{itemize}
\end{definition}


\begin{remark}
As we shall see in the next section, the entropy inequality (\ref{entropy_inequ}) and Poincaré's inequality (\ref{poincare_inequ}) included in the definition of DMV solutions to the Navier-Stokes system with potential temperature transport are fundamental to guarantee DMV-strong uniqueness.
\end{remark}



\section{DMV-strong uniqueness}\label{sec_uniqueness}
The aim of this section is to derive a DMV-strong uniqueness principle for our measure-valued solutions. For this purpose, we rely on the concept of relative energy. 
We introduce the \textit{total (physical) entropy} $S$ as
\begin{align}
    S = S(\varrho,\theta) = \left\{\begin{array}{cl}
        \varrho\ln\!\left((a\theta^\gamma)^\frac{1}{\gamma-1}\right) & \text{if $\theta >0$,} \\[2mm]
        -\infty & \text{if $\theta=0$}
    \end{array}\right. \label{def_S}
\end{align}
and realize that the pressure $p=a(\varrho\theta)^\gamma$ can be rewritten with respect to $\varrho$, $S$ as
\begin{align}
    \p(\varrho,S) = \left\{\begin{array}{cl}
        \varrho^\gamma \exp\left((\gamma-1)\,\dfrac{S}{\varrho}\right) & \text{if $\varrho>0$ and $S\in\reals$,} \\[4mm]
        0 & \text{if $\varrho=0$ and $S\leq 0$, or $S=-\infty$,} \\[2mm]
        \infty & \text{if $\varrho=0$ and $S>0$.}
    \end{array}\right. \label{press_rho_S}
\end{align}
We proceed by defining the relative energy between a triplet of arbitrary  functions $(\varrho,\theta,\fatu)$ belonging to a regularity class
\begin{align}
    \varrho,\theta\in \Ck{1}(\overline{\Omega}_T)\,, \quad \varrho,\theta>0\,, \quad \fatu\in\Ck{1}(\overline{\Omega}_T) \cap 
    L^2(0,T; W^{2,\infty}(\Omega))\,, \quad \fatu|_{[0,T]\times\po}=\bm{0}, \label{adm_test_func_rel_energy_inequ}
\end{align}
and a DMV solution $\mathcal{V}$ to the Navier-Stokes system with potential temperature transport (\ref{cequation})--(\ref{isen_press})  as
\begin{align}
    E(\mathcal{V}\hspace{0.3mm}|\hspace{0.3mm}\varrho,\theta,\fatu) = \Big\langle\mathcal{V};\frac{1}{2}\,\bvarrho\,|\bfatu-\fatu|^2 + P(\bvarrho,\bS) - \frac{\partial P(\varrho,S)}{\partial \varrho}\,(\bvarrho-\varrho) - \frac{\partial P(\varrho,S)}{\partial S}\,(\bS-S) - P(\varrho,S) \Big\rangle\,, \label{def_rel_energy}
\end{align}
where $P(\varrho,S)=\frac{1}{\gamma-1}\,\p(\varrho,S)$ is the pressure potential expressed in terms of $\varrho$ and $S$, $S=S(\varrho,\theta)$, and $\bS=S(\bvarrho,\btheta)$. We note that the relative energy defined in (\ref{def_rel_energy}) is the generalization of the relative energy used in \citep[Formula (4.59)]{Feireisl_Lukacova_Book} in the context of weak solutions. The corresponding relative energy inequality reads as follows.

\begin{lemma}[\textbf{Relative energy inequality}]\label{lem_rel_energy_inequ}
Let $(\varrho,\theta,\fatu)$ be a triplet of test functions, cf. $\mathrm{(\ref{adm_test_func_rel_energy_inequ})}$, and $\mathcal{V}$ a DMV solution to $\mathrm{(\ref{cequation})}$--$\mathrm{(\ref{isen_press})}$ in the sense of Definition $\mathrm{\ref{def_meas_sol}}$. Then the relative energy defined in $\mathrm{(\ref{def_rel_energy})}$ satisfies the inequality
\begin{align}
    &\left[\,\into E(\mathcal{V}_{(t,\,\cdot\,)}\hspace{0.3mm}|\hspace{0.3mm}\varrho,\theta,\fatu)\;\dx\right]_{t\,=\,0}^{t\,=\,\tau} + \int_{\,\overline{\Omega}}\,\dd\mathfrak{E}(\tau) + \int_{\,\overline{\Omega}_\tau}\dd\mathfrak{D} + \inttauinto\mathbb{S}(\gradx(\fatu_\mathcal{V}-\fatu)):\gradx(\fatu_\mathcal{V}-\fatu)\;\dxdt \notag \\[2mm]
    &\quad \leq -\inttauinto \big\langle\mathcal{V};\bvarrho\,(\bfatu-\fatu)^T\cdot\gradx\fatu\cdot(\bfatu-\fatu)\big\rangle\,\dxdt \notag \\[2mm]
    &\quad \phantom{\,\leq\,} -\inttauinto \left\langle\mathcal{V};\p(\bvarrho,\bS) - \frac{\partial \p(\varrho,S)}{\partial \varrho}\,(\bvarrho-\varrho) - \frac{\partial \p(\varrho,S)}{\partial S}\,(\bS-S) - \p(\varrho,S)\right\rangle\divx(\fatu)\;\dxdt \notag \\[2mm]
    &\quad \phantom{\,\leq\,} +\inttauinto \left\langle\mathcal{V};\frac{\bvarrho}{\varrho}\,(\fatu-\bfatu)\right\rangle\cdot\big[\varrho\pt\fatu+\varrho\gradx\fatu\cdot\fatu+\gradx \p(\varrho,S) - \divx(\mathbb{S}(\gradx\fatu))\big]\,\dxdt \notag \\[2mm]
    &\quad \phantom{\,\leq\,} + \inttauinto \left\langle\mathcal{V};(\varrho-\bvarrho\,)\,\frac{1}{\varrho}\,\frac{\partial \p(\varrho,S)}{\partial\varrho}\right\rangle\big[\pt\varrho+\divx(\varrho\fatu)\big]\,\dxdt \notag \\[2mm]
    &\quad \phantom{\,\leq\,} + \inttauinto \left\langle\mathcal{V};(\varrho-\bvarrho\,)\,\frac{1}{\varrho}\,\frac{\partial \p(\varrho,S)}{\partial S}\right\rangle\big[\pt S+\divx(S\fatu)\big]\,\dxdt \notag \\[2mm]
    &\quad \phantom{\,\leq\,} + \inttauinto \left\langle\mathcal{V};\frac{\bvarrho}{\varrho}\,S-\bS\right\rangle\left[\pt\vartheta+\fatu\cdot\gradx\vartheta+\frac{\partial \p(\varrho,S)}{\partial S}\,\divx(\fatu)\right]\dxdt \notag \\[2mm]
    &\quad \phantom{\,\leq\,} +\inttauinto \left\langle\mathcal{V};\left(\frac{\bvarrho}{\varrho}\,S-\bS\right)(\bfatu-\fatu)\right\rangle\cdot\gradx\vartheta\;\dxdt - \inttauinto\gradx\fatu:\dd\bm{\mathfrak{R}}(t)\hspace{0.3mm}\dt \notag \\[2mm]
    &\quad \phantom{\,\leq\,} +\inttauinto \left\langle\mathcal{V};(\bvarrho-\varrho)\,\frac{1}{\varrho}\,\divx(\mathbb{S}(\gradx\fatu))\cdot(\fatu-\bfatu)\right\rangle\dxdt \label{rel_energy_inequ}
\end{align}
for a.a. $\tau\in(0,T)$. Here,
\begin{align*}
    \vartheta = \frac{1}{\gamma-1}\,\frac{\partial \p(\varrho,S)}{\partial S}
\end{align*}
denotes the absolute temperature.
\end{lemma}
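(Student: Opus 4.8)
The plan is to obtain \eqref{rel_energy_inequ} by decomposing the relative energy \eqref{def_rel_energy} into pieces, each controlled by exactly one of the relations in Definition~\ref{def_meas_sol}, and then reassembling. First split
\[
E(\mathcal{V}\,|\,\varrho,\theta,\fatu)=\Big\langle\mathcal{V};\tfrac12\,\bvarrho\,|\bfatu-\fatu|^2\Big\rangle+\Big\langle\mathcal{V};\,\mathcal{P}(\bvarrho,\bS\,|\,\varrho,S)\Big\rangle,
\]
where $\mathcal{P}(\bvarrho,\bS\,|\,\varrho,S)=P(\bvarrho,\bS)-\partial_\varrho P(\varrho,S)(\bvarrho-\varrho)-\partial_S P(\varrho,S)(\bS-S)-P(\varrho,S)$ is the Bregman-type remainder of $P(\cdot,\cdot)$. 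The kinetic part is expanded as $\tfrac12\bvarrho|\bfatu|^2-\bvarrho\,\bfatu\cdot\fatu+\tfrac12\bvarrho|\fatu|^2$. Since \eqref{press_rho_S} gives $P(\bvarrho,\bS)=P(\bvarrho\,\btheta)$, the block $\tfrac12\bvarrho|\bfatu|^2+P(\bvarrho,\bS)$ is exactly the integrand of the energy inequality \eqref{energy_inequ}, and using \eqref{energy_inequ} for it already brings the viscous term $\mathbb{S}(\gradx\fatu_\mathcal{V}):\gradx\fatu_\mathcal{V}$ and the defects $\mathfrak{E},\mathfrak{D}$ onto the left-hand side.

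The next step is to feed the remaining blocks into the DMV relations with test functions built from the smooth triplet; the regularity class \eqref{adm_test_func_rel_energy_inequ} together with $\varrho,\theta>0$ guarantees admissibility. Namely: $-\bvarrho\,\bfatu\cdot\fatu$ is treated with the momentum equation tested with $\bm{\varphi}=\fatu$ (which is $C^1$ and vanishes on $\po$), producing in particular the Reynolds defect term $-\inttauinto\gradx\fatu:\dd\bm{\mathfrak{R}}(t)\,\dt$; $\tfrac12\bvarrho|\fatu|^2$ is treated with the continuity equation tested with $\varphi=\tfrac12|\fatu|^2$; the linearization $-\partial_\varrho P(\varrho,S)(\bvarrho-\varrho)$ with the continuity equation tested with $\varphi=\partial_\varrho P(\varrho,S)$ together with a purely deterministic piece $\int_\Omega\varrho\,\partial_\varrho P(\varrho,S)$ differentiated directly; and, writing $\bS=\tfrac{\ln a}{\gamma-1}\,\bvarrho+\tfrac{\gamma}{\gamma-1}\,\bvarrho\ln\btheta$ and noting $\vartheta=\partial_S P(\varrho,S)=a\varrho^{\gamma-1}\theta^\gamma>0$, the linearization $-\partial_S P(\varrho,S)(\bS-S)$ splits into a density part handled via the continuity equation tested with a multiple of $\vartheta$, and an entropy part handled via the entropy inequality \eqref{entropy_inequ} tested with the nonnegative function $\psi=\tfrac{\gamma}{\gamma-1}\,\vartheta$. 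The positivity $\vartheta>0$ is precisely what makes the direction of \eqref{entropy_inequ} compatible with the inequality sign in \eqref{rel_energy_inequ}; the remaining $-P(\varrho,S)$ is deterministic.

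It then remains to reorganize the resulting expression into the form \eqref{rel_energy_inequ}. The viscous contributions are assembled using $\mathbb{S}(\gradx(\fatu_\mathcal{V}-\fatu)):\gradx(\fatu_\mathcal{V}-\fatu)=\mathbb{S}(\gradx\fatu_\mathcal{V}):\gradx\fatu_\mathcal{V}-2\,\mathbb{S}(\gradx\fatu_\mathcal{V}):\gradx\fatu+\mathbb{S}(\gradx\fatu):\gradx\fatu$ together with the integrations by parts $\int_\Omega\divx(\mathbb{S}(\gradx\fatu))\cdot\fatu_\mathcal{V}=-\int_\Omega\mathbb{S}(\gradx\fatu):\gradx\fatu_\mathcal{V}$ and $\int_\Omega\divx(\mathbb{S}(\gradx\fatu))\cdot\fatu=-\int_\Omega\mathbb{S}(\gradx\fatu):\gradx\fatu$, both valid because $\fatu_\mathcal{V},\fatu$ vanish on $\po$ and $\fatu\in L^2(0,T;W^{2,\infty}(\Omega))$ — this is the reason for that regularity assumption, and it is here that the last term of \eqref{rel_energy_inequ} and the $-\divx(\mathbb{S}(\gradx\fatu))$ inside the momentum residual appear. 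The convective term $-\langle\mathcal{V};\bvarrho\,(\bfatu-\fatu)^T\cdot\gradx\fatu\cdot(\bfatu-\fatu)\rangle$ is recovered by completing the square, combining $-\langle\mathcal{V};\bvarrho\,\bfatu\otimes\bfatu\rangle:\gradx\fatu$ from the momentum equation, the $\gradx(\tfrac12|\fatu|^2)$-flux from the continuity equation, and the $\varrho\gradx\fatu\cdot\fatu$ contribution; the time-derivative and pressure-gradient terms are collected by adding and subtracting $\varrho\pt\fatu+\varrho\gradx\fatu\cdot\fatu+\gradx\p(\varrho,S)-\divx(\mathbb{S}(\gradx\fatu))$ tested against $\tfrac{\bvarrho}{\varrho}(\fatu-\bfatu)$; and, after inserting $\bfatu=(\bfatu-\fatu)+\fatu$ and using the chain rule $\pt P(\varrho,S)=\partial_\varrho P\,\pt\varrho+\partial_S P\,\pt S$ (and likewise for $\gradx$), the remaining density/entropy cross-terms collapse into the residuals weighted by $(\varrho-\bvarrho)\tfrac1\varrho\,\partial_\varrho\p$, $(\varrho-\bvarrho)\tfrac1\varrho\,\partial_S\p$, $\tfrac{\bvarrho}{\varrho}S-\bS$, plus the term $\langle\mathcal{V};(\tfrac{\bvarrho}{\varrho}S-\bS)(\bfatu-\fatu)\rangle\cdot\gradx\vartheta$.

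The main obstacle I anticipate is precisely the bookkeeping of the pressure and entropy contributions. Because the pressure law \eqref{isen_press} controls only the product $\varrho\theta$, the Bregman remainder of $P$ genuinely mixes $\bvarrho$ and $\bS$, so producing the residuals $\pt S+\divx(S\fatu)$ and $\pt\vartheta+\fatu\cdot\gradx\vartheta+\partial_S\p\,\divx(\fatu)$ together with the weight $\tfrac{\bvarrho}{\varrho}S-\bS$ requires a careful add-and-subtract; moreover $S$ and $\vartheta$ are differentiated only through the smooth triplet whereas $\bvarrho\ln\btheta$ is controlled only by an inequality, so the grouping must be arranged so that every genuinely measure-valued error term ends up with the correct sign on the right-hand side of \eqref{rel_energy_inequ}, while the sign-indefinite remainders are exactly the residuals, which vanish whenever $(\varrho,\theta,\fatu)$ solves \eqref{cequation}--\eqref{isen_press}.
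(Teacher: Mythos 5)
Your proposal is correct and follows essentially the same route as the paper's (very terse) proof: expand $E$ into the block $\tfrac12\bvarrho|\bfatu|^2+P(\bvarrho,\bS)$ handled by the energy inequality, test the continuity and momentum identities and the entropy inequality with functions built from the smooth triplet, and rewrite the viscous cross-term by Gauss's theorem to produce the last integral of \eqref{rel_energy_inequ} and the $-\divx(\mathbb{S}(\gradx\fatu))$ inside the momentum residual. You also correctly spell out the step the paper leaves implicit—that $\bS$ must be split as $\tfrac{\ln a}{\gamma-1}\bvarrho+\tfrac{\gamma}{\gamma-1}\bvarrho\ln\btheta$ so that the entropy inequality (tested with the nonnegative $\psi$ proportional to $\vartheta$) controls exactly the $\bvarrho\ln\btheta$ part—and correctly flag the positivity of $\vartheta$ and the $W^{2,\infty}$ regularity as the ingredients that make the signs and the integrations by parts work.
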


\begin{proof}
Using Gauss's theorem we easily verify that
\begin{align*}
    -\inttauinto \mathbb{S}(\gradx\fatu):\gradx(\fatu_\mathcal{V}-\fatu)\;\dxdt &= - \inttauinto \left\langle\mathcal{V};\frac{\bvarrho}{\varrho}\,(\fatu-\bfatu)\right\rangle\cdot\divx(\mathbb{S}(\gradx\fatu))\;\dxdt \notag \\[2mm]
    &\phantom{\,=\,} + \inttauinto \left\langle\mathcal{V};(\bvarrho-\varrho)\,\frac{1}{\varrho}\,\divx(\mathbb{S}(\gradx\fatu))\cdot(\fatu-\bfatu)\right\rangle\dxdt\,.
\end{align*}
Thus, to prove inequality (\ref{rel_energy_inequ}), it suffices to realize that
\begin{align*}
    \left[\,\into E(\mathcal{V}_{(t,\,\cdot\,)}\hspace{0.3mm}|\hspace{0.3mm}\varrho,\theta,\fatu)\;\dx\right]_{t\,=\,0}^{t\,=\,\tau} &= \left[\,\into\left\langle\mathcal{V}_{(t,\,\cdot\,)};\frac{1}{2}\,\bvarrho\,|\bfatu|^2 + P(\bvarrho,\bS)\right\rangle\dx\right]_{t\,=\,0}^{t\,=\tau} + \left[\,\into \p(\varrho,S)\;\dx\right]_{t\,=\,0}^{t\,=\tau} \\[2mm]
    &\phantom{\,=\,}  - \left[\,\into\omean{\mathcal{V}_{(t,\,\cdot\,)};\bS}\,\frac{\partial P(\varrho,S)}{\partial S}\;\dx\right]_{t\,=\,0}^{t\,=\tau} - \left[\,\into\omean{\mathcal{V}_{(t,\,\cdot\,)};\bvarrho\,\bfatu}\cdot\fatu\;\dx\right]_{t\,=\,0}^{t\,=\tau} \\[2mm]
    &\phantom{\,=\,} + \left[\,\into\omean{\mathcal{V}_{(t,\,\cdot\,)};\bvarrho\,}\left(\frac{1}{2}\,|\fatu|^2-\frac{\partial P(\varrho,S)}{\partial\varrho}\right)\dx\right]_{t\,=\,0}^{t\,=\tau}
\end{align*}
and utilize (\ref{energy_inequ})--(\ref{entropy_inequ}) to rewrite the terms on the right-hand side. We omit the necessary computations since they are straightforward and very similar to those leading to \citep[ (4.66)]{Feireisl_Lukacova_Book}.
\end{proof}

From the relative energy inequality we can deduce DMV-strong uniqueness.

\begin{theorem}[DMV-strong uniqueness]\label{cor_dmv_strong_uniqueness}
Let $\gamma>1$,  $\Omega\subset\reals^d$, $d\in\{2,3\}$, be a bounded Lipschitz-continuous domain. Further, let $T^* > 0 $ and $(\varrho,\theta,\fatu)$ be a strong solution to system $\mathrm{(\ref{cequation})}$--$\mathrm{(\ref{isen_press})}$ on $\Omega_{T^*}$ belonging to the regularity class $\mathrm{(\ref{adm_test_func_rel_energy_inequ})}.$   Let
$\mathcal{V}$ be a DMV solution in the sense of Definition $\mathrm{\ref{def_meas_sol}}$ emanating from the same initial data. Then
\begin{align*}
    \mathfrak{E}=0\,, \quad \mathfrak{D}=0\,, \quad \bm{\mathfrak{R}}=\bm{0}\,,
\end{align*}
and the DMV and strong solutions coincide on $[0,T^*]$, i.e.
$$
\mathcal{V}_{(t,\fatx)}=\delta_{(\varrho(t,\fatx),\theta(t,\fatx),\fatu(t,\fatx))} \quad \text{for a.a. } (t,\fatx)\in\Omega_{T^*}.
$$
\end{theorem}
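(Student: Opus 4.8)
The plan is to feed the strong solution $(\varrho,\theta,\fatu)$ as the triplet of test functions into the relative energy inequality of Lemma \ref{lem_rel_energy_inequ} and show that all the right-hand side terms are controlled by the relative energy itself, so that Gronwall's lemma forces everything to vanish. First I would observe that since $(\varrho,\theta,\fatu)$ solves \eqref{cequation}--\eqref{isen_press} classically, the three bracketed expressions $[\varrho\pt\fatu+\varrho\gradx\fatu\cdot\fatu+\gradx \p(\varrho,S) - \divx(\mathbb{S}(\gradx\fatu))]$, $[\pt\varrho+\divx(\varrho\fatu)]$, and $[\pt S+\divx(S\fatu)]$ appearing on the right-hand side of \eqref{rel_energy_inequ} are identically zero --- the last one because $S=S(\varrho,\theta)=\varrho\ln((a\theta^\gamma)^{1/(\gamma-1)})$ satisfies a pure transport equation $\pt S+\divx(S\fatu)=0$ as a consequence of \eqref{cequation} and \eqref{zequation} (potential temperature transport implies $\theta$ is transported along particle paths, hence so is $\ln\theta$, hence $S$ satisfies the same continuity-type equation as $\varrho$). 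Likewise the bracket $[\pt\vartheta+\fatu\cdot\gradx\vartheta+\frac{\partial \p(\varrho,S)}{\partial S}\divx(\fatu)]$ multiplying $\langle\mathcal{V};\frac{\bvarrho}{\varrho}S-\bS\rangle$ should be checked to vanish identically for the strong solution; this is a thermodynamic identity relating the absolute temperature $\vartheta=\frac{1}{\gamma-1}\frac{\partial\p}{\partial S}$ to the motion, and again follows from the transport structure. After this reduction, the surviving right-hand side consists of: the convective term $-\langle\mathcal{V};\bvarrho(\bfatu-\fatu)^T\cdot\gradx\fatu\cdot(\bfatu-\fatu)\rangle$, the pressure-Hessian term with factor $\divx(\fatu)$, the cross term $\langle\mathcal{V};(\frac{\bvarrho}{\varrho}S-\bS)(\bfatu-\fatu)\rangle\cdot\gradx\vartheta$, the Reynolds-defect term $-\int\gradx\fatu:\dd\bm{\mathfrak{R}}$, and the viscous remainder $\langle\mathcal{V};(\bvarrho-\varrho)\frac{1}{\varrho}\divx(\mathbb{S}(\gradx\fatu))\cdot(\fatu-\bfatu)\rangle$.

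Next I would establish the coercivity of the relative energy: using the strict positivity and $C^1$-bounds on $(\varrho,\theta)$, convexity of $(z,\sigma)\mapsto P(z,\sigma)$ in the variables $(\varrho,S)$ gives that
\begin{align*}
    E(\mathcal{V}\hspace{0.3mm}|\hspace{0.3mm}\varrho,\theta,\fatu) \;\gtrsim\; \Big\langle\mathcal{V};\tfrac12\bvarrho|\bfatu-\fatu|^2\Big\rangle + \Big\langle\mathcal{V};\mathbf{1}_{\mathcal{K}}\big(|\bvarrho-\varrho|^2+|\bS-S|^2\big)\Big\rangle + \Big\langle\mathcal{V};\mathbf{1}_{\mathcal{K}^c}\big(1+\bvarrho^\gamma+|\bS|\big)\Big\rangle
\end{align*}
where $\mathcal{K}$ is a compact neighborhood (in the $(\bvarrho,\bS)$-plane) of the graph of $(\varrho,S)$ bounded away from $\{\bvarrho=0\}$; this is the standard Bresch--Jabin/Feireisl-type splitting into "essential" and "residual" parts. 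Each surviving right-hand side term is then estimated against $E$. The convective term is bounded by $\|\gradx\fatu\|_{L^\infty}\langle\mathcal{V};\bvarrho|\bfatu-\fatu|^2\rangle \lesssim \|\gradx\fatu\|_{L^\infty}E$. The pressure-Hessian term is, up to the bounded factor $\|\divx\fatu\|_{L^\infty}$, exactly the expectation of the Taylor remainder of $\p(\cdot,\cdot)$, which on $\mathcal{K}$ is $O(|\bvarrho-\varrho|^2+|\bS-S|^2)$ and on $\mathcal{K}^c$ is dominated by the residual part of $E$ (here one uses $\gamma>1$ and the explicit form \eqref{press_rho_S}). The Reynolds term satisfies $|\int\gradx\fatu:\dd\bm{\mathfrak{R}}|\le \|\gradx\fatu\|_{L^\infty}\int\dd\mathrm{tr}(\bm{\mathfrak{R}})\le \overline d\,\|\gradx\fatu\|_{L^\infty}\int\dd\mathfrak{E}$, which is absorbed into the $\int\dd\mathfrak{E}(\tau)$ already on the left (after Gronwall) or treated as a term $\lesssim \int_0^\tau(\text{stuff})$. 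The viscous remainder term uses the extra regularity $\fatu\in L^2(0,T;W^{2,\infty}(\Omega))$ so that $\divx(\mathbb{S}(\gradx\fatu))\in L^2(0,T;L^\infty)$; then by Cauchy--Schwarz and the splitting, $\langle\mathcal{V};(\bvarrho-\varrho)\frac{1}{\varrho}\divx(\mathbb{S}(\gradx\fatu))\cdot(\fatu-\bfatu)\rangle$ is bounded on the essential set by $\|\divx\mathbb{S}(\gradx\fatu)\|_{L^\infty}(\langle\mathcal{V};|\bvarrho-\varrho|^2\rangle+\langle\mathcal{V};|\bfatu-\fatu|^2\rangle)$ --- but for the $|\bfatu-\fatu|^2$ piece one must not use $E$ directly (which only controls $\bvarrho|\bfatu-\fatu|^2$, degenerating where $\bvarrho$ is small); instead invoke \emph{Poincaré's inequality} \eqref{poincare_inequ} with $\bm U=\fatu$ to bound $\int_0^\tau\langle\mathcal{V};|\bfatu-\fatu|^2\rangle$ by $\int_0^\tau|\gradx(\fatu_\mathcal{V}-\fatu)|^2 + \int\dd\mathfrak{E}+\int\dd\mathfrak{D}$, and the gradient term is itself controlled by the left-hand viscous dissipation $\inttauinto\mathbb{S}(\gradx(\fatu_\mathcal{V}-\fatu)):\gradx(\fatu_\mathcal{V}-\fatu)$ via Korn's inequality. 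The cross term $\langle\mathcal{V};(\frac{\bvarrho}{\varrho}S-\bS)(\bfatu-\fatu)\rangle\cdot\gradx\vartheta$ is handled the same way, Cauchy--Schwarz splitting $\frac{\bvarrho}{\varrho}S-\bS$ (an affine-in-$(\bvarrho,\bS)$ quantity controlled by the quadratic part of $E$ on $\mathcal{K}$ and by the residual part off $\mathcal{K}$) against $|\bfatu-\fatu|$, then applying Poincaré again.

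Assembling all estimates, \eqref{rel_energy_inequ} yields, for a.a. $\tau$,
\begin{align*}
    \Big[\into E(\mathcal{V}_{(t,\cdot)}|\varrho,\theta,\fatu)\dx\Big]_{t=0}^{t=\tau} + \int_{\overline\Omega}\dd\mathfrak{E}(\tau) + \int_{\overline\Omega_\tau}\dd\mathfrak{D} \;\le\; C\!\int_0^\tau\!\Big(\into E(\mathcal{V}_{(t,\cdot)}|\varrho,\theta,\fatu)\dx + \int_{\overline\Omega}\dd\mathfrak{E}(t)\Big)\dt,
\end{align*}
where $C$ depends on $T^*$ and the norms of the strong solution in the class \eqref{adm_test_func_rel_energy_inequ}. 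Since the initial data coincide, $\into E(\mathcal{V}_{(0,\cdot)}|\varrho,\theta,\fatu)\dx=0$, so Gronwall's lemma gives $\into E(\mathcal{V}_{(\tau,\cdot)}|\varrho,\theta,\fatu)\dx=0$, $\mathfrak{E}(\tau)=0$, and $\mathfrak{D}=0$ for all $\tau\in[0,T^*]$; the constraint $\underline d\,\mathfrak{E}\le\mathrm{tr}(\bm{\mathfrak{R}})\le\overline d\,\mathfrak{E}$ then forces $\bm{\mathfrak{R}}=\bm 0$. Finally, $E=0$ together with the coercivity bound forces $\langle\mathcal{V};\bvarrho|\bfatu-\fatu|^2\rangle=0$ and the residual part to vanish, so the measure is supported on the essential set, and then the quadratic control gives $\langle\mathcal{V};|\bvarrho-\varrho|^2+|\bS-S|^2\rangle=0$; since on the essential set $\bvarrho$ is bounded below, $|\bfatu-\fatu|^2$ also vanishes $\mathcal{V}$-a.s.; inverting the change of variables $(\bvarrho,\bS)\mapsto(\bvarrho,\btheta)$ (smooth and invertible where $\bvarrho>0$) gives $\btheta=\theta$ as well, hence $\mathcal{V}_{(t,\fatx)}=\delta_{(\varrho,\theta,\fatu)(t,\fatx)}$ a.e. I expect the main obstacle to be the careful bookkeeping of the residual (far-field, small-density) part of the relative energy: one must verify that the pressure Taylor remainder and the entropy-type linear terms are genuinely dominated by $\langle\mathcal{V};\mathbf{1}_{\mathcal{K}^c}(1+\bvarrho^\gamma+|\bS|)\rangle$ uniformly, using only $\gamma>1$ and the explicit formula \eqref{press_rho_S} for $\p(\varrho,S)$ --- in particular the $\exp((\gamma-1)S/\varrho)$ factor means the residual analysis in the $S$-variable is more delicate than in the classical barotropic case, and coupling the degeneracy of $\bvarrho|\bfatu-\fatu|^2$ to Poincaré's inequality correctly is the part that genuinely uses the extra structure built into Definition \ref{def_meas_sol}.
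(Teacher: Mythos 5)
Your overall strategy is exactly the paper's: insert the strong solution into the relative energy inequality so that the consistency brackets vanish (you correctly identify that $\pt S+\divx(S\fatu)=0$ and $\pt\vartheta+\fatu\cdot\gradx\vartheta+\frac{\partial p}{\partial S}\divx\fatu=0$ hold for strong solutions), invoke a coercivity lemma for the relative energy splitting into essential/residual parts, bound the surviving cross and viscous-remainder terms, use Poincar\'e's inequality to control $\langle\mathcal V;|\bfatu-\fatu|^2\rangle$ where the $\bvarrho$-weight degenerates, and close with Gronwall.

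There is, however, one genuine gap in the form of the residual coercivity you state, and it affects the cross-term estimate. You claim
\[
E(\mathcal V\,|\,\varrho,\theta,\fatu)\;\gtrsim\;\Big\langle\mathcal V;\tfrac12\bvarrho|\bfatu-\fatu|^2\Big\rangle+\Big\langle\mathcal V;\mathds 1_{\mathcal K}\big(|\bvarrho-\varrho|^2+|\bS-S|^2\big)\Big\rangle+\Big\langle\mathcal V;\mathds 1_{\mathcal K^c}\big(1+\bvarrho^\gamma+|\bS|\big)\Big\rangle,
\]
whereas what the auxiliary Lemma~\ref{aux_lem_rel_energy} actually delivers on the residual set is the strictly stronger weight $1+\bvarrho|\bfatu-\fatu|^2+(\bvarrho\,\btheta)^\gamma$. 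This is not cosmetic. On the residual set one estimates
\[
\Big|\Big(\tfrac{\bvarrho}{\varrho}S-\bS\Big)(\bfatu-\fatu)\Big|\;\lesssim\;\bvarrho|\bfatu-\fatu|+\bvarrho\,\btheta^{1/2}|\bfatu-\fatu|\;\lesssim\;\bvarrho+\bvarrho\,\btheta+\bvarrho|\bfatu-\fatu|^2,
\]
and the term $\bvarrho\,\btheta$ is \emph{not} dominated by $1+\bvarrho^\gamma+|\bS|$: take $\bvarrho$ bounded and $\btheta\to\infty$, so that $\bvarrho\,\btheta\to\infty$ while $|\bS|\sim\bvarrho\ln\btheta$ grows only logarithmically. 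It \emph{is} dominated by $1+(\bvarrho\,\btheta)^\gamma$ via Young's inequality, which is exactly why the lemma is stated in $(\bvarrho,\btheta)$-coordinates rather than $(\bvarrho,\bS)$-coordinates and why the residual weight retains the full pressure $(\bvarrho\,\btheta)^\gamma$. You flagged this as the place where the argument might be delicate; the resolution is to use the sharper residual weight, not the $(\bvarrho,\bS)$-splitting you wrote down. One further small deviation: you invoke Poincar\'e's inequality for the cross term as well as for the viscous remainder, but the paper only needs it for the latter. Keeping the $\bvarrho$-weight via $\bS\lesssim\bvarrho\,\btheta^{1/2}$ bounds the cross term directly by the relative energy with no dissipation needed, which keeps the final Gronwall inequality cleaner; Poincar\'e is unavoidable only for the viscous remainder because there $(\bvarrho-\varrho)$ produces a genuinely unweighted $|\bfatu-\fatu|^2$.
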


\begin{proof}
Plugging the strong solution $(\varrho,\theta,\fatu)$ into the relative energy inequality (\ref{rel_energy_inequ}), we obtain
\begin{align}
    &\left[\,\into E(\mathcal{V}_{(t,\,\cdot\,)}\hspace{0.3mm}|\hspace{0.3mm}\varrho,\theta,\fatu)\;\dx\right]_{t\,=\,0}^{t\,=\,\tau} + \int_{\,\overline{\Omega}}\,\dd\mathfrak{E}(\tau) + \int_{\,\overline{\Omega}_\tau}\dd\mathfrak{D} + \inttauinto\mathbb{S}(\gradx(\fatu_\mathcal{V}-\fatu)):\gradx(\fatu_\mathcal{V}-\fatu)\;\dxdt \notag \\[2mm]
    &\quad \leq -\inttauinto \big\langle\mathcal{V};\bvarrho\,(\bfatu-\fatu)^T\cdot\gradx\fatu\cdot(\bfatu-\fatu)\big\rangle\,\dxdt \notag \\[2mm]
    &\quad \phantom{\,\leq\,} -\inttauinto \left\langle\mathcal{V};\p(\bvarrho,\bS) - \frac{\partial \p(\varrho,S)}{\partial \varrho}\,(\bvarrho-\varrho) - \frac{\partial \p(\varrho,S)}{\partial S}\,(\bS-S) - \p(\varrho,S)\right\rangle\divx(\fatu)\;\dxdt \notag \\[2mm]
    &\quad \phantom{\,\leq\,} +\inttauinto \left\langle\mathcal{V};\left(\frac{\bvarrho}{\varrho}\,S-\bS\right)(\bfatu-\fatu)\right\rangle\cdot\gradx\vartheta\;\dxdt - \inttauinto\gradx\fatu:\dd\bm{\mathfrak{R}}(t)\hspace{0.3mm}\dt \notag \\[2mm]
    &\quad \phantom{\,\leq\,} +\inttauinto \left\langle\mathcal{V};(\bvarrho-\varrho)\,\frac{1}{\varrho}\,\divx(\mathbb{S}(\gradx\fatu))\cdot(\fatu-\bfatu)\right\rangle\dxdt \notag \\[2mm]
    &\quad \lesssim \inttau\left[\,\into E(\mathcal{V}\hspace{0.3mm}|\hspace{0.3mm}\varrho,\theta,\fatu)\;\dx + \int_{\,\overline{\Omega}}\,\dd\mathfrak{E}(t)\right]\dt +\inttauinto \left\langle\mathcal{V};\left(\frac{\bvarrho}{\varrho}\,S-\bS\right)(\bfatu-\fatu)\right\rangle\cdot\gradx\vartheta\;\dxdt \notag \\[2mm]
    &\quad \phantom{\,\leq\,} +\inttauinto \left\langle\mathcal{V};(\bvarrho-\varrho)\,\frac{1}{\varrho}\,\divx(\mathbb{S}(\gradx\fatu))\cdot(\fatu-\bfatu)\right\rangle\dxdt
    \label{dmv_strong_uniqueness_1}
\end{align}
for a.a. $\tau\in(0,T^*)$. To handle the last two integrals, we first observe that
\begin{align}
    \inttauinto\mathbb{S}(\gradx(\fatu_\mathcal{V}-\fatu)):\gradx(\fatu_\mathcal{V}-\fatu)\;\dxdt &= \inttauinto\big[\mu|\gradx(\fatu_\mathcal{V}-\fatu)|^2+\nu|\divx(\fatu_\mathcal{V}-\fatu)|^2\big]\,\dxdt \notag \\[2mm]
    &\geq \mu\inttauinto |\gradx(\fatu_\mathcal{V}-\fatu)|^2\;\dxdt\,. \label{dmv_strong_uniqueness_2}
\end{align}
Next, we set
\begin{align*}
    (\underline{\varrho},\overline{\varrho},\underline{\theta},\overline{\theta}) = \left(\inf_{(t,\fatx)\,\in\,\Omega_{T^*}}\{\varrho(t,\fatx)\}, \sup_{(t,\fatx)\,\in\,\Omega_{T^*}}\{\varrho(t,\fatx)\},\inf_{(t,\fatx)\,\in\,\Omega_{T^*}}\{\theta(t,\fatx)\}, \sup_{(t,\fatx)\,\in\,\Omega_{T^*}}\{\theta(t,\fatx)\}\right)
\end{align*}
and apply Lemma \ref{aux_lem_rel_energy} to find constants $c_1,c_2,c_3>0$ that only depend on $\underline{\varrho}$, $\overline{\varrho}$, $\underline{\theta}$, $\overline{\theta}$, $c_\star$, and $\gamma$, and corresponding sets
\begin{align*}
    \mathcal{R} &= \left\{(\bvarrho,\btheta,\bfatu)\in\reals^{d+2}\left|\,c_1\underline{\varrho}\leq\bvarrho\leq c_2\overline{\varrho}\,, \; c_\star\leq\btheta \leq c_3\overline{\theta} \right.\right\}, \\[2mm]
    \mathcal{S} &= \left\{(\bvarrho,\btheta,\bfatu)\in\reals^{d+2}\left|\,\bvarrho\geq 0\,, \; \btheta\geq c_\star \right.\right\}\big\backslash \mathcal{R}
\end{align*}
such that
\begin{align}
    \inttauinto E(\mathcal{V}\hspace{0.3mm}|\hspace{0.3mm}\varrho,\theta,\fatu)\;\dxdt &\gtrsim \inttauinto \big\langle\mathcal{V}; \mathds{1}_\mathcal{R}(\bvarrho,\btheta,\bfatu)\big(|\bfatu-\fatu|^2 + |\bvarrho-\varrho|^2 + |\bS-S|^2\big) \big\rangle\,\dxdt \notag \\[2mm]
    &\phantom{\,\gtrsim\,} +\inttauinto \big\langle\mathcal{V}; \mathds{1}_\mathcal{S}(\bvarrho,\btheta,\bfatu)\big(1+\bvarrho\,|\bfatu-\fatu|^2 + (\bvarrho\btheta)^\gamma\big)\big\rangle\,\dxdt\,. \label{help}
\end{align}
Seeing that
\begin{align*}
    \left|\left(\frac{\bvarrho}{\varrho}\,S-\bS\right)(\bfatu-\fatu)\right| &\lesssim \big|(\bvarrho S -\bS\varrho)(\bfatu-\fatu)\big| \lesssim \big|S(\bvarrho-\varrho)(\bfatu-\fatu)\big| + \big|\varrho(S-\bS)(\bfatu-\fatu)\big| \notag \\[2mm]
    &\lesssim |\bfatu-\fatu|^2 + |\bvarrho-\varrho|^2 + |\bS-S|^2
\end{align*}
as well as
\begin{align*}
    &\left|\mathds{1}_\mathcal{S}(\bvarrho,\btheta,\bfatu)\left(\frac{\bvarrho}{\varrho}\,S-\bS\right)(\bfatu-\fatu)\right| \notag \\[2mm]
    &\qquad \lesssim \mathds{1}_\mathcal{S}(\bvarrho,\btheta,\bfatu)\left(\bvarrho\,|\bfatu-\fatu| + \bS\,|\bfatu-\fatu|\right) \lesssim \mathds{1}_\mathcal{S}(\bvarrho,\btheta,\bfatu)\left(\bvarrho\,|\bfatu-\fatu| + \bvarrho\btheta^{\,1/2}\,|\bfatu-\fatu|\right) \notag \\[2mm]
    &\qquad \lesssim \mathds{1}_\mathcal{S}(\bvarrho,\btheta,\bfatu)\left(\bvarrho + \bvarrho\btheta + \bvarrho\,|\bfatu-\fatu|^2\right) \lesssim \mathds{1}_\mathcal{S}(\bvarrho,\btheta,\bfatu)\left(1 + \bvarrho\,|\bfatu-\fatu|^2 + (\bvarrho\btheta)^\gamma\right),
\end{align*}
we may use (\ref{help}) to deduce
\begin{align}
    \left|\,\inttauinto \left\langle\mathcal{V};\left(\frac{\bvarrho}{\varrho}\,S-\bS\right)(\bfatu-\fatu)\right\rangle\cdot\gradx\vartheta\;\dxdt\,\right| \lesssim \inttauinto E(\mathcal{V}\hspace{0.3mm}|\hspace{0.3mm}\varrho,\theta,\fatu)\;\dxdt\,. \label{dmv_strong_uniqueness_3}
\end{align}
We proceed by observing that
\begin{align*}
    \big|(\bvarrho-\varrho)(\fatu-\bfatu)\big| \lesssim |\bvarrho-\varrho|^2 + |\bfatu-\fatu|^2
\end{align*}
and
\begin{align*}
    \big|\mathds{1}_\mathcal{S}(\bvarrho,\btheta,\bfatu)(\bvarrho-\varrho)(\fatu-\bfatu)\big| &\lesssim \mathds{1}_\mathcal{S}(\bvarrho,\btheta,\bfatu)(\bvarrho+1)|\bfatu-\fatu| \notag \\[2mm]
    &\lesssim \mathds{1}_\mathcal{S}(\bvarrho,\btheta,\bfatu)\left(\bvarrho + \bvarrho\,|\bfatu-\fatu|^2 + \alpha|\bfatu-\fatu|^2 + \alpha^{-1}\right) \notag \\[2mm]
    &\lesssim \mathds{1}_\mathcal{S}(\bvarrho,\btheta,\bfatu)\left(1+(\bvarrho\btheta)^\gamma + \bvarrho\,|\bfatu-\fatu|^2 + \alpha|\bfatu-\fatu|^2 + \alpha^{-1}\right)
\end{align*}
for all $\alpha>0$, where here and in the sequel the constant hidden in "$\lesssim$" does not depend on $\alpha$. Together with (\ref{help}) and Poincaré's inequality (\ref{poincare_inequ}), these observations yield
\begin{align}
    &\left|\,\inttauinto \left\langle\mathcal{V};(\bvarrho-\varrho)\,\frac{1}{\varrho}\,\divx(\mathbb{S}(\gradx\fatu))\cdot(\fatu-\bfatu)\right\rangle\dxdt\,\right| \notag \\[2mm]
    &\lesssim (1+\alpha^{-1}) \inttauinto E(\mathcal{V}\hspace{0.3mm}|\hspace{0.3mm}\varrho,\theta,\fatu)\;\dxdt + \alpha\left(\,\inttauinto|\gradx(\fatu_{\mathcal{V}}-\bm{u})|^2\;\dxdt+ \inttau\!\!\int_{\,\overline{\Omega}}\,\dd\mathfrak{E}(t)\hspace{0.3mm}\dt + \int_{\,\overline{\Omega}_\tau}\!\dd\mathfrak{D}\right)\,. \label{dmv_strong_uniqueness_4}
\end{align}
Finally, combining (\ref{dmv_strong_uniqueness_1}), (\ref{dmv_strong_uniqueness_2}), (\ref{dmv_strong_uniqueness_3}), and (\ref{dmv_strong_uniqueness_4}), we arrive at
\begin{align*}
    &\left[\,\into E(\mathcal{V}_{(t,\,\cdot\,)}\hspace{0.3mm}|\hspace{0.3mm}\varrho,\theta,\fatu)\;\dx\right]_{t\,=\,0}^{t\,=\,\tau} + \int_{\,\overline{\Omega}}\,\dd\mathfrak{E}(\tau) + \int_{\,\overline{\Omega}_\tau}\dd\mathfrak{D} + \mu\inttauinto |\gradx(\fatu_\mathcal{V}-\fatu)|^2\;\dxdt \notag \\[2mm]
    &\quad \lesssim (1+\alpha^{-1}) \inttauinto E(\mathcal{V}\hspace{0.3mm}|\hspace{0.3mm}\varrho,\theta,\fatu)\;\dxdt + (1+\alpha)\inttau\!\!\int_{\,\overline{\Omega}}\,\dd\mathfrak{E}(t)\hspace{0.3mm}\dt \notag \\[2mm]
    &\quad \phantom{\,\lesssim\,} +\alpha\left(\,\inttauinto|\gradx(\fatu_{\mathcal{V}}-\bm{u})|^2\;\dxdt + \int_{\,\overline{\Omega}_\tau}\!\dd\mathfrak{D}\right)
\end{align*}
for a.a. $\tau\in(0,T^*)$ and all $\alpha>0$. In particular, there exists a constant $C>0$ such that
\begin{align*}
    &\left[\,\into E(\mathcal{V}_{(t,\,\cdot\,)}\hspace{0.3mm}|\hspace{0.3mm}\varrho,\theta,\fatu)\;\dx\right]_{t\,=\,0}^{t\,=\,\tau} + \int_{\,\overline{\Omega}}\,\dd\mathfrak{E}(\tau) + \int_{\,\overline{\Omega}_\tau}\dd\mathfrak{D} \notag \\[2mm]
    &\qquad\qquad\qquad \leq C\left(\,\inttauinto E(\mathcal{V}\hspace{0.3mm}|\hspace{0.3mm}\varrho,\theta,\fatu)\;\dxdt + \inttau\!\!\int_{\,\overline{\Omega}}\,\dd\mathfrak{E}(t)\hspace{0.3mm}\dt + \inttau\!\!\int_{\,\overline{\Omega}_t}\dd\mathfrak{D}\hspace{0.3mm}\dt\right)
\end{align*}
for a.a. $\tau\in(0,T^*)$. Consequently, the desired result follows from Gronwall's lemma.
\end{proof}

\section{Conclusions}\label{sec_conlusions}
In the present paper, we proved the DMV-strong uniqueness principle for the Navier-Stokes system with potential temperature transport (\ref{cequation})--(\ref{isen_press}). In fact, this result shows that strong solutions are stable in the class of DMV solutions introduced in \citep{Lukacova_Schoemer_existence}. We have  derived the relative energy by  taking the total physical entropy into account.  More precisely, the pressure was rewritten as a function of the density and entropy, instead of the  total potential temperature only. Moreover, we also  require
the entropy inequality (\ref{entropy_inequ}) that is included in our definition of DMV solutions. The importance of Poincaré's inequality (\ref{poincare_inequ}) became clear during the proof of DMV-strong uniqueness: It allowed us to rewrite viscosity terms in such a way that Gronwall's lemma was applicable. Finally, the
DMV-strong uniqueness result follows  by applying
 Gronwall's lemma.

The DMV-strong uniqueness principle was used in our recent work \citep{Lukacova_Schoemer_existence}. In Theorem 6.1  we relied on this result to prove the strong convergence of the numerical solutions of our mixed finite element--finite volume scheme \citep[Definition 3.2]{Lukacova_Schoemer_existence} to the classical solution of the system  as long as  the latter exists.

\bibliographystyle{plain}
\bibliography{Bibliography}

\appendix
\section{Appendix}\label{sec_appendix}
\subsection{An auxiliary result concerning the relative energy}
Here, we prove the auxiliary result used in the proof of DMV-strong uniqueness.

\begin{lemma}\label{aux_lem_rel_energy}
Let $\bvarrho\geq 0$, $\btheta\geq c_\star >0$, $0<\underline{\varrho}\leq\varrho\leq\overline{\varrho}$, $0<\underline{\theta}\leq\theta\leq\overline{\theta}$, and $\gamma >1$. Then there exist constants $c_1,c_2,c_3,c_4>0$ that only depend on $\underline{\varrho},\overline{\varrho},\underline{\theta},\overline{\theta},c_\star$, and $\gamma$, and corresponding sets
\begin{align*}
    \mathcal{R} &= \left\{(\bvarrho,\btheta)\in\reals^{2}\left|\,c_1\underline{\varrho}\leq \bvarrho \leq c_2\overline{\varrho}\,, \; c_\star\leq\btheta \leq c_3\overline{\theta} \right.\right\}, \\[2mm]
    \mathcal{S} &= \left\{(\bvarrho,\btheta)\in\reals^{2}\left|\,\bvarrho\geq 0\,, \; \btheta\geq c_\star \right.\right\}\big\backslash \mathcal{R}
\end{align*}
such that
\begin{align}
    F(\bvarrho,\bS\hspace{0.3mm}|\hspace{0.3mm}\varrho,S) &\equiv P(\bvarrho,\bS) - \frac{\partial P(\varrho,S)}{\partial \varrho}(\bvarrho-\varrho) - \frac{\partial P(\varrho,S)}{\partial S}(\bS-S) - P(\varrho,S) \notag \\[2mm]
    &\geq c_4 \left[\mathds{1}_\mathcal{S}(\bvarrho,\btheta)\big(|\bvarrho-\varrho|^2 + |\bS-S|^2\big) + \mathds{1}_\mathcal{R}(\bvarrho,\btheta)\big(1+(\bvarrho\btheta)^\gamma\big)\right],
\end{align}
where $P(\varrho,S)=\frac{1}{\gamma-1}\,\p(\varrho,S)$ with $p$ from $\mathrm{(\ref{press_rho_S})}$, $S=S(\varrho,\theta)$ is defined in $\mathrm{(\ref{def_S})}$, and $\bS=S(\bvarrho,\btheta)$.
\end{lemma}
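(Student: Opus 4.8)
\textbf{Proof proposal for Lemma \ref{aux_lem_rel_energy}.}

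The plan is to analyze the ``relative pressure potential'' $F(\bvarrho,\bS\,|\,\varrho,S)$ through the strict convexity of $P(\varrho,S)$ in the variables $(\varrho,S)$. The first step is to establish that $P$ is a strictly convex function on the open set $\{\bvarrho>0\}\times\reals$: since $\p(\varrho,S)=\varrho^\gamma\exp((\gamma-1)S/\varrho)$, one computes the Hessian of $P=\frac{1}{\gamma-1}\p$ and checks positive definiteness (this is where $\gamma>1$ enters). The quantity $F$ is then precisely the difference between $P(\bvarrho,\bS)$ and its first-order Taylor expansion around $(\varrho,S)$, hence nonnegative, and by Taylor's theorem with integral remainder it equals $\frac{1}{2}(\bvarrho-\varrho,\bS-S)^T \nabla^2 P(\xi)(\bvarrho-\varrho,\bS-S)$ for some point $\xi$ on the segment joining $(\varrho,S)$ and $(\bvarrho,\bS)$.

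Next I would split into the two regimes dictated by the sets $\mathcal R$ and $\mathcal S$. On a neighborhood of the ``reference box'' $\{\underline\varrho\le\varrho\le\overline\varrho,\ \underline\theta\le\theta\le\overline\theta\}$ — which is what $\mathcal R$ should be after choosing $c_1$ small and $c_2,c_3$ large (the apparent asymmetry that $\mathcal R$ has lower $\btheta$-bound $c_\star$ rather than $c_1\underline\theta$ is fine because $c_\star$ is a fixed positive constant and we are free to shrink it further or absorb it) — the Hessian $\nabla^2 P$ is bounded below by a positive constant depending only on the listed parameters, because $\xi$ then ranges in a compact subset of $\{\bvarrho>0\}\times\reals$ on which $\nabla^2 P$ is continuous and positive definite. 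This yields $F\gtrsim |\bvarrho-\varrho|^2+|\bS-S|^2$ on $\mathcal R$, i.e.\ the first term. Wait — comparing with the statement, the roles of $\mathcal R$ and $\mathcal S$ in the two summands are as written: the quadratic lower bound holds on $\mathcal R$ and the coercive-growth bound $1+(\bvarrho\btheta)^\gamma$ on $\mathcal S$; I would double-check the indicator placement against how the lemma is invoked in \eqref{help}, where indeed $\mathds 1_\mathcal R$ multiplies the quadratic terms.

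On the complement $\mathcal S$, the strategy is different: here either $\bvarrho$ is very small (or zero) or $\bvarrho$ is very large or $\btheta$ is very large, so we want the coercive bound $F\gtrsim 1+(\bvarrho\btheta)^\gamma$. I would use the explicit form $P(\bvarrho,\bS)=\frac{1}{\gamma-1}\bvarrho^\gamma\exp((\gamma-1)\bS/\bvarrho)$ together with the identity $\bvarrho^\gamma\exp((\gamma-1)\bS/\bvarrho)=a(\bvarrho\btheta)^\gamma$ coming from $\bS=S(\bvarrho,\btheta)=\bvarrho\ln((a\btheta^\gamma)^{1/(\gamma-1)})$, so that $P(\bvarrho,\bS)=\frac{a}{\gamma-1}(\bvarrho\btheta)^\gamma$. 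The linear correction terms $\frac{\partial P(\varrho,S)}{\partial\varrho}(\bvarrho-\varrho)+\frac{\partial P(\varrho,S)}{\partial S}(\bS-S)+P(\varrho,S)$ are, on $\mathcal S$, controlled by bounded coefficients (the derivatives of $P$ at the reference point lie in a fixed compact range) times $\bvarrho$ and $\bS=\bvarrho\ln((a\btheta^\gamma)^{1/(\gamma-1)})$; since $\btheta\ge c_\star>0$, one has $|\bS|\lesssim \bvarrho(1+\btheta)$, and by Young's inequality $\bvarrho+\bvarrho\btheta \lesssim 1 + (\bvarrho\btheta)^\gamma + \bvarrho^{\gamma'}$ with $\bvarrho$ itself absorbed similarly. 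Choosing $c_1$ sufficiently small and $c_2,c_3$ sufficiently large then guarantees that on $\mathcal S$ the super-linear term $\frac{a}{\gamma-1}(\bvarrho\btheta)^\gamma$ dominates: when $\bvarrho\btheta$ is large it beats the linear terms, when $\bvarrho$ is small it is essentially the whole of $F$ up to the bounded constant $P(\varrho,S)$, and the ``$+1$'' is supplied precisely by the region where $\bvarrho$ is bounded away from $0$ and $\infty$ but $\btheta$ is large (or by direct comparison using that $F$ is continuous and strictly positive away from the single point $(\varrho,S)$, hence bounded below on any set avoiding a neighborhood of it). I expect the main obstacle to be the bookkeeping in this last case split — making the choice of $c_1,c_2,c_3$ uniform in $(\varrho,\theta)$ ranging over the compact box while keeping the ``$+1$'' and the $(\bvarrho\btheta)^\gamma$ contributions honestly separated — rather than any deep analytic difficulty; the convexity/Taylor argument on $\mathcal R$ is routine.
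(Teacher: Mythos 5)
Your plan matches the paper's proof in approach: on $\mathcal{R}$, where the line segment from $(\varrho,S)$ to $(\bvarrho,\bS)$ lies in a fixed compact convex subset of $(0,\infty)\times\reals$, the quadratic lower bound follows from the positive definiteness of $\nabla^2 P$ there, while on $\mathcal{S}$ the paper writes $F$ explicitly in the $(\bvarrho,\btheta)$ variables as $F = a\big((\varrho\theta)^\gamma - \tfrac{\gamma}{\gamma-1}\varrho^{\gamma-1}\theta^\gamma\bvarrho(1-\ln\theta+\ln\btheta) + \tfrac{1}{\gamma-1}(\bvarrho\btheta)^\gamma\big)$, decomposes $\mathcal{S}=\mathcal{S}^+\cup\mathcal{S}^-\cup\mathcal{S}^0$ according to whether $\bvarrho<c_1\underline\varrho$, $\bvarrho>c_2\overline\varrho$, or $\btheta>c_3\overline\theta$, and obtains the coercive bound $1+(\bvarrho\btheta)^\gamma$ on each piece by Young's inequality after tuning $c_1,c_2,c_3$ — exactly the bookkeeping you anticipate. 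You are also right that the indicators in the displayed inequality of the lemma are swapped relative to what the proof actually establishes and what \eqref{help} uses: it is $\mathds{1}_{\mathcal{R}}$ that multiplies the quadratic terms $|\bvarrho-\varrho|^2+|\bS-S|^2$ and $\mathds{1}_{\mathcal{S}}$ that multiplies $1+(\bvarrho\btheta)^\gamma$.
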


\begin{proof}
To begin with, let $0<c_1\leq c_2$, and $c_3\geq c_\star/\,\overline{\theta}$ be arbitrary numbers. Further, let $\mathcal{R}$, $\mathcal{S}$ be defined as described in the lemma. We decompose $\mathcal{S}$ into the sets
\begin{align*}
    \mathcal{S}^+ = \left\{(\bvarrho,\btheta)\in\mathcal{S}\left|\,\bvarrho < c_1\underline{\varrho} \right.\right\}\,, \quad  \mathcal{S}^- = \left\{(\bvarrho,\btheta)\in\mathcal{S}\left|\,\bvarrho > c_2\overline{\varrho} \right.\right\}\,, \quad \mathcal{S}^0 = \mathcal{S}\hspace{0.3mm}\backslash(\mathcal{S}^+\cup\mathcal{S}^-)
\end{align*}
and observe that
\begin{align*}
    F(\bvarrho,\bS\hspace{0.3mm}|\hspace{0.3mm}\varrho,S) &= a\left((\varrho\theta)^\gamma - \frac{\gamma}{\gamma-1}\,\varrho^{\gamma-1}\theta^\gamma\,\bvarrho\,\big(1-\ln(\theta)+\ln(\btheta)\big)+\frac{1}{\gamma-1}\,(\bvarrho\btheta)^\gamma\right) \notag \\[2mm]
    &\geq a\left((\varrho\theta)^\gamma - \frac{\gamma}{\gamma-1}\,\varrho^{\gamma-1}\theta^\gamma\,\bvarrho\,\big(1+|\ln(\theta)|+\btheta^{\,1/2}\big)+\frac{1}{\gamma-1}\,(\bvarrho\btheta)^\gamma\right)
\end{align*}
wherefore
\begin{align*}
    &\mathds{1}_{\mathcal{S}^-}(\bvarrho,\btheta) F(\bvarrho,\bS\hspace{0.3mm}|\hspace{0.3mm}\varrho,S) \notag \\[2mm]
    &\quad \geq a(\underline{\varrho}\underline{\theta})^\gamma-\frac{ac_1\gamma}{\gamma-1}\left(1+\max\left\{|\ln(\underline{\theta})|,|\ln(\overline{\theta})|\right\}\right)(\overline{\varrho}\overline{\theta})^\gamma - c_1^{2\gamma/(2\gamma-1)}\,\frac{a(2\gamma-1)}{2(\gamma-1)}\overline{\varrho}^{\,\gamma-1+\gamma/(2\gamma-1)}\overline{\theta}^{\,\gamma} \notag \\[2mm]
    &\quad \phantom{\,=\,} + \frac{a}{\gamma-1}\left(1- \frac{c_1^{2\gamma}}{2}\,\overline{\varrho}^{\,\gamma-1}\overline{\theta}^{\,\gamma}\right)(\bvarrho\btheta)^\gamma\,, \\[6mm]
    &\mathds{1}_{\mathcal{S}^+}(\bvarrho,\btheta) F(\bvarrho,\bS\hspace{0.3mm}|\hspace{0.3mm}\varrho,S) \notag \\[2mm]
    &\quad \geq a(\underline{\varrho}\underline{\theta})^\gamma + \frac{a}{\gamma-1}\left(1-\gamma c_2^{1-\gamma}\left(\frac{\overline{\theta}}{c_\star}\right)^{\!\!\gamma}\left(1+\max\left\{|\ln(\underline{\theta})|,|\ln(\overline{\theta})|\right\}+c_\star^{1/2}\right)\right)(\bvarrho\btheta)^\gamma\,, \\[6mm]
    &\mathds{1}_{\mathcal{S}^0}(\bvarrho,\btheta) F(\bvarrho,\bS\hspace{0.3mm}|\hspace{0.3mm}\varrho,S) \notag \\[2mm]
    &\quad \geq a(\underline{\varrho}\underline{\theta})^\gamma + \frac{a}{\gamma-1}\left(1-\gamma c_3^{-\gamma}\left(\frac{\overline{\varrho}}{c_2\underline{\varrho}}\right)^{\!\!\gamma-1}\left(1+\max\left\{|\ln(\underline{\theta})|,|\ln(\overline{\theta})|\right\}+(c_3\overline{\theta})^{1/2}\right)\right)(\bvarrho\btheta)^\gamma\,.
\end{align*}
Here, the first inequality is obtained using Young's inequality. Together, the above observations show that we can specify $c_1,c_2,c_3$ in dependence of $\underline{\varrho},\overline{\varrho},\underline{\theta},\overline{\theta},c_\star,\gamma$ such that
\begin{align*}
    \mathds{1}_{\mathcal{S}}(\bvarrho,\btheta) F(\bvarrho,\bS\hspace{0.3mm}|\hspace{0.3mm}\varrho,S) \geq c_{4,1}\mathds{1}_{\mathcal{S}}(\bvarrho,\btheta)\big(1+(\bvarrho\btheta)^\gamma\big)\,,
\end{align*}
where $c_{4,1}>0$ solely depends on $\underline{\varrho},\overline{\varrho},\underline{\theta},\overline{\theta},c_\star,\gamma$. Having fixed $c_1,c_2,c_3$ as described above, it remains to show that
\begin{align*}
    \mathds{1}_{\mathcal{R}}(\bvarrho,\btheta) F(\bvarrho,\bS\hspace{0.3mm}|\hspace{0.3mm}\varrho,S) \geq c_{4,2}\mathds{1}_{\mathcal{R}}(\bvarrho,\btheta)\big(|\bvarrho-\varrho|^2+|\bS-S|^2\big)\,,
\end{align*}
where $c_{4,2}>0$ only depends on $\underline{\varrho},\overline{\varrho},\underline{\theta},\overline{\theta},c_\star,\gamma$. However, this inequality is a direct consequence of the fact that $P=P(\varrho,S)$ is strongly convex on every compact convex subset of $(0,\infty)\times\reals$ which, in turn, follows from the positive definiteness of the Hessian of $P$ on $(0,\infty)\times\reals$.
\end{proof}

\end{document}